\newtheorem{theorem}{Theorem}[section]
\newtheorem{proposition}[theorem]{Proposition}
\newcommand{\bthm}{\begin{theorem}}
\newcommand{\ethm}{\end{theorem}}
\newtheorem{lemma}[theorem]{Lemma}
\newcommand{\blem}{\begin{lemma}}
\newcommand{\elem}{\end{lemma}}
\newcommand {\bua} {\begin{eqnarray*}}
\newcommand {\eua} {\end {eqnarray*}}
\newcommand{\beq}{\begin{equation}}
\newcommand{\eeq}{\end{equation}}
\newcommand{\eps}{\varepsilon}
\newcommand{\ds}{\displaystyle}
\newcommand{\be}{\begin{enumerate}}
\newcommand{\ee}{\end{enumerate}}
\newcommand{\N}{\mathbb{N}}
\newcommand{\gluexy}{X \sqcup_{\theta} Y}
\newcommand{\dnxy}{D_n(x)}
\newcommand{\dnxyk}{D_n(x,k)}
\newcommand{\dnixy}{D_{n_i}(x)}
\newcommand{\dnixyk}{D_{n_i}(x,k)}
\newcommand{\dniuxy}{D_{n_{i+1}}(x)}
\newcommand{\dkxy}{D_k(x)}
\newcommand{\dnimxy}{D_{n_{i-1}}(x)}
\newcommand{\dnirxy}{D_{n_{I(R)}}(x)}
\newcommand{\dnjxy}{D_{n_j}(x)}
\newcommand{\mik}{M_{i,k}}
\newcommand{\mjk}{M_{j,k}}
\begin{document}

\title{A note on an ergodic theorem in weakly uniformly convex geodesic spaces}

\author{Lauren\c{t}iu Leu\c{s}tean${}^{1,2}$,  Adriana Nicolae${}^{3,4}$ \\[0.2cm]
\footnotesize ${}^1$ Faculty of Mathematics and Computer Science, University of Bucharest,\\
\footnotesize Academiei 14,  P. O. Box 010014, Bucharest, Romania\\[0.1cm]
\footnotesize ${}^2$ Simion Stoilow Institute of Mathematics of the Romanian Academy,\\
\footnotesize P. O. Box 1-764, RO-014700 Bucharest, Romania\\[0.1cm]
\footnotesize ${}^3$ Department of Mathematics, Babe\c{s}-Bolyai University, \\
\footnotesize  Kog\u{a}lniceanu 1, 400084 Cluj-Napoca, Romania\\[0.1cm]
\footnotesize ${}^4$ Simion Stoilow Institute of Mathematics of the Romanian Academy,\\
\footnotesize Research group of the project PD-3-0152,\\
\footnotesize P. O. Box 1-764, RO-014700 Bucharest, Romania\\[0.1cm]
\footnotesize E-mails:  Laurentiu.Leustean@imar.ro, anicolae@math.ubbcluj.ro
}

\date{}

\maketitle

\begin{abstract}
\noindent Karlsson and Margulis \cite{KarMar99} proved in the setting of uniformly convex geodesic spaces, 
which additionally satisfy a nonpositive curvature condition, an ergodic theorem that focuses on the asymptotic 
behavior of integrable cocycles of nonexpansive mappings over an ergodic measure-preserving transformation. 
In this note we show that this result holds true when assuming a weaker notion of uniform convexity.\\[1mm]
\noindent {\em MSC:} 37A30, 53C23\\[1mm]
\noindent {\em Keywords:} ergodic theorem, geodesic space, weak uniform convexity, Busemann convexity
\end{abstract}

\section{Introduction}

One recent research direction in ergodic theory consists in
generalizing classical ergodic theorems to the setting of geodesic
spaces with a sufficiently rich geometry.  Let $Y$ be a geodesic space
and $D \subseteq Y$. Consider $S$ a semigroup of nonexpansive
(i.e. $1$-Lipschitz) self-mappings defined on $D$ and endow $S$ with
the Borel $\sigma$-algebra induced by the compact-open topology on
$S$.  Assume that $(X,\mu)$ is a probability measure space, $T : X \to
X$ is an ergodic measure-preserving transformation and $w :X \to S$ is
a measurable map. Define the cocycle
\[a_n(x) = w(x)w(Tx)\cdots w(T^{n-1}x).\]
Fix $y \in D$ and suppose that $\ds \int_X d(y,a_1(x)y)d\mu(x)<
\infty$.  One can easily see that the sequence
$\displaystyle\left(\int_X d(y, a_n(x)y) d\mu(x)\right)$ is
subadditive and so, by Fekete's subadditive lemma \cite{Fek23}, the
following limit exists
\[0 \le A = \lim_{n \to \infty}\frac{1}{n}\int_X  d(y, a_n(x)y) d\mu(x) = \inf_{n}\frac{1}{n}\int_X  
d(y, a_n(x)y) d\mu(x) < \infty.\] As an immediate application of
Kingman's subadditive ergodic theorem \cite{Kin68} one gets that for
almost every $x\in X$, $ \ds \lim_{n \to \infty}\frac{1}{n}
d(y,a_n(x)y) = A$.

Karlsson and Margulis \cite{KarMar99} proved that if $Y$ is a complete Busemann convex geodesic 
space satisfying a uniform convexity condition (see Section \ref{section-wuc}, (i) for the 
precise definition), the following  holds: 
if $A>0$, then for almost every $x \in X$, there exists a unique geodesic ray $\gamma$ in 
$Y$ starting at $y$ and depending on $x$ such that
$\ds \lim_{n \to \infty}\frac{1}{n}d(\gamma(An),a_n(x)y) = 0$.	
Thus, instead of the convergence of averages as in classical ergodic results, 
one basically obtains for almost every $x \in X$ the existence of a geodesic ray  that issues 
at $y$ such that, as $n \to \infty$, the values of the cocycles $a_n(x)$ at $y$ 
are `close' to this geodesic ray, a property which is referred to in \cite{KarLed11} as ray approximation. 

This result generalizes the multiplicative ergodic theorem of Oseledec
\cite{Ose68} (see also \cite{KarMar99,KarLed11,Kai89}).  A discussion
on the asymptotic behavior of ergodic products of nonexpansive
mappings and isometries of a (proper) metric space and applications
thereof can be found in \cite{KarLed11}.  In a related line,
ergodic-theoretic results in CAT$(0)$ spaces were proved by Es-Sahib
and Heinich \cite{EsSHei99} and Sturm \cite{Stu03} using barycenter
techniques.  Considering a different notion of barycenter map, Austin
\cite{Aus11} proved an extension of the pointwise ergodic theorem for
mappings with values in complete separable CAT$(0)$ spaces.  Following
a similar proof strategy, but defining another suitable notion of
barycenter (that actually recovers the one from \cite{EsSHei99}),
Navas \cite{Nav13} gave an ergodic theorem for mappings taking values
in Busemann spaces.

In this paper we show that the ergodic theorem given in \cite{KarMar99} holds true for 
a more general class of geodesic spaces, by generalizing its proof to these spaces. More precisely, we prove that we can relax the 
uniform convexity assumption on $Y$ used in \cite{KarMar99} as follows: 
$Y$ is said to be {\it weakly uniformly convex} if for any $a \in Y$, $r>0$ and $\eps \in (0,2]$, 
\[
\delta_Y(a,r,\eps) =  \inf\{1-d(a,m(x,y))/r : d(a,x), d(a,y) \le r, d(x,y) \ge \eps r\} > 0,  
\]
where $m(x,y)$ is a midpoint of a geodesic segment from $x$ to $y$.  
We refer to the mapping $\delta_Y$ as the  {\it the modulus of convexity} of $Y$. 
This notion was used by Reich and Shafrir \cite{ReiSha90} in the setting of hyperbolic spaces. 
In addition, we assume that for every $a\in Y$ and $\eps > 0$ there exists $s > 0$ such that 
\begin{equation} \label{cond-wuc-inf}
\inf_{r \ge s} \delta_Y(a,r,\eps) > 0. 
\end{equation}
The main result of the paper is the following.

\bthm\label{main-erg-thm}
Assume that $Y$ is a complete Busemann convex geodesic space that is weakly uniformly convex and
satisfies (\ref{cond-wuc-inf}). If $A>0$, then for almost every $x \in X$, there exists a 
unique geodesic ray $\gamma$ in $Y$ that issues at $y$ and depends on $x$ such that 
$\ds\lim_{n \to \infty}\frac{1}{n}d(\gamma(An),a_n(x)y) = 0$.
\ethm
In Section \ref{section-wuc} we recall different notions of uniform convexity used in 
nonlinear settings, all of them fitting within the class of weakly 
uniform convex geodesic spaces defined as above and satisfying \eqref{cond-wuc-inf}. 
We also discuss a convexity condition that we actually use in the proof and provide an 
example of a weakly uniformly convex geodesic space satisfying \eqref{cond-wuc-inf} which 
is not uniformly convex in the sense of \cite{KarMar99}.
Section \ref{section-proof-main-thm-detailed} contains the proof of our main result.

\section{Weakly uniformly convex geodesic spaces} \label{section-wuc}

Let $(Y,d)$ be a metric space. $Y$ admits midpoints if for every $x,y \in Y$ there exists a point 
$m(x,y) \in Y$ (called a {\it midpoint} of $x$ and $y$) such that $d(x,m(x,y)) = d(y,m(x,y)) = d(x,y)/2$. 
A {\it geodesic path} (resp. {\it geodesic ray}) in $Y$ is a distance-preserving map 
$\gamma:[0,l]\subseteq\mathbb{R}\to Y$  (resp. $\gamma:[0,\infty)\to Y$).  
If $\gamma$ is a geodesic path such that $\gamma(0)=x$, $\gamma(l)=y$, we say that 
$\gamma$ joins $x,y$ and the image of $\gamma$ is a {\it geodesic segment} from $x$ to $y$, 
denoted by $[x,y]$ if there is no confusion.  
$Y$ is a {\it (uniquely) geodesic space} if every two points in $Y$ are joined by a (unique) geodesic path. 
Any complete metric space that admits midpoints is a geodesic space.
A point $z \in Y$ belongs to a geodesic segment $[x,y]$ if and only if 
there exists $t\in [0,1]$ such that $d(x,z)=td(x,y)$ and $d(y,z)=(1-t)d(x,y)$ and we write 
$z=(1-t)x+ty$. For $t=1/2$ we get a midpoint $m(x,y)$. 
See, for instance, \cite{BriHae99} for details on geodesic spaces.

Following \cite{Leu07}, we can  define weak uniform convexity in an alternative way: 
a geodesic space $Y$ is weakly uniformly convex if there exists a mapping
$\eta:Y\times (0,\infty)\times(0,2]\rightarrow (0,1]$ such that for any
$a\in Y, r>0, \varepsilon\in(0,2]$, every $x,y\in Y$ and all geodesic segments $[x,y]$ we have that, 
\begin{eqnarray}
\left.\begin{array}{l}
d(a,x)\le r\\
d(a,y)\le r\\
d(x,y)\ge\varepsilon r
\end{array}
\right\}
& \quad \Rightarrow & \quad d\left(a,m(x,y)\right)\le (1-\eta(a,r,\eps))r.
\end{eqnarray}
Such a mapping $\eta$ is referred to as {\it a modulus of weak uniform convexity}. 
This definition is equivalent to the previous one, with the modulus of  convexity 
$\delta_Y$ giving the largest modulus of weak uniform convexity.  
A modulus $\eta$ is said to be {\it monotone} if it is nonincreasing in the second argument.\\
A weakly uniformly convex geodesic space $Y$ is strictly convex (i.e., for all $a,x,y \in Y$ with 
$x \ne y$, $d\left(a, m(x,y)\right) < \max\{d(a,x),d(a,y)\}$), hence uniquely geodesic. 
Likewise, if $d(a,x),d(a,y)\leq r$, then $d(a,p)\le r$ for all $p\in [x,y]$.  \\
If one can find a modulus $\eta=\eta(r,\eps)$
that does not depend on $a$, we get the stronger notion of {\it uniform convexity}, 
first considered in a nonlinear setting in \cite{GoeSekSta80, GoeRei84}, and $\eta$ is called 
a {\it modulus of uniform convexity}.  

We include next some particular notions of (weak) uniform convexity:
\be

\item \label{def-unif-conv-KM} Karlsson and Margulis \cite{KarMar99} define uniform 
convexity in a metric space $Y$ that admits midpoints as follows: $Y$ is said to be 
uniformly convex if there exists a strictly decreasing and continuous function $g:[0,1]\to[0,1]$  
with $g(0)=1$, 
so that for any $a,x,y\in Y$ and any midpoint $m(x,y)$,
\beq
\frac{d(a,m(x,y))}M \leq g\left(\frac{d(x,y)}{2M}\right), \quad \text{where } M=\max\{d(x,a),d(y,a)\}. 
\label{uc-KarMar}
\eeq
Any geodesic space satisfying the above condition is uniformly convex in our sense. Indeed, given $r>0$,  
$\varepsilon\in(0,2]$, let $a,x,y\in Y$ satisfy $d(x,a)\leq r, d(y,a)\leq r$ and 
$d(x,y)\ge\varepsilon r$. Then 
  $d(a,m(x,y))\leq g(d(x,y)/(2M))M\leq g(\eps/2)r$, so $\eta(\eps) =  1 - g\left(\eps/2\right) > 0$ 
is a modulus of uniform convexity that only depends on $\eps$.

\item Gelander, Karlsson and Margulis \cite{GelKarMar08} consider a strictly 
convex geodesic space $Y$ to be uniformly convex if it is weakly uniformly convex and 
for all $\eps > 0$ there exists $\eta(\eps) > 0$ such that $\delta_Y(a,r,\eps) \ge \eta(\eps)$
 for all $r > 0, a \in Y$.
 
\item In \cite{Kel14}, a metric space $Y$ that admits midpoints is called uniformly 
$p$-convex  (where $p \in [1,\infty]$) if for every $\varepsilon > 0$ there exists 
$\rho_p(\varepsilon) \in (0,1)$ such that for all $a,x,y \in Y$ with 
$d(x,y) > \varepsilon \mathcal{M}^p(d(a,x),d(a,y))$ for $p > 1$ and 
$d(x,y) > |d(a,x) - d(a,y)| + \varepsilon \mathcal{M}^1(d(a,x),d(a,y))$ for $p = 1$ we have that
\beq\label{uc-Kel14}
d(a,m(x,y)) \le (1-\rho_p(\varepsilon))\mathcal{M}^p(d(a,x),d(a,y)),
\eeq
where $\mathcal{M}^p(\alpha,\beta) = (\alpha^p/2 + \beta^p/2)^{1/p}$
and $\mathcal{M}^\infty(\alpha,\beta) = \max\{\alpha,\beta\}$.\\
By \cite[Lemma 1.4]{Kel14}, 
any uniformly $p$-convex space is uniformly $\infty$-convex which, in turn, is uniformly convex 
in our sense. Let $r>0$, $\varepsilon\in(0,2]$ and $a,x,y\in Y$ 
with $d(x,a)\leq r, d(y,a)\leq r$ and $d(x,y)\ge\varepsilon r$. Then $d(x,y) > (\eps M)/2$, 
where $M$ is as in \eqref{uc-KarMar}. This yields $d(a,m(x,y))\leq (1-\rho_\infty(\eps/2))M 
\leq (1-\rho_\infty(\eps/2))r$.
Hence,  $Y$ has a modulus of uniform convexity $\eta(\eps) = \rho_\infty\left(\eps/2\right) > 0$ 
that only depends on $\eps$. 
\ee
Geodesic spaces which are $p$-uniformly convex in the sense of Naor and Silberman \cite{NaoSil11} 
(see also \cite{Kuw14,Oht07}) satisfy all three definitions (i)-(iii). These spaces are defined as follows: 
for a fixed $1<p<\infty$, a geodesic space $Y$ is called {\it $p$-uniformly convex} with parameter 
$k>0$ if for every $a,x,y\in Y$ and $t\in[0,1]$,
\[d(a,(1-t)x + ty)^p\leq(1-t) d(a,x)^p+t d(a,y)^p- \frac{k}2 t(1-t)d(x,y)^p.\]
We see below that $p$-uniformly convex spaces indeed satisfy (i)-(iii). Remark first that, by 
\cite[Proposition 2.5, (1)]{Kuw14}, $k \le c_p$ with 
$$
c_p=\left\{
\begin{array}{ll}
2(p-1) & \mbox{if } p \in (1,2) \\
8/2^p & \mbox{if } p \ge 2.
\end{array}\right.
$$
Let $a \in Y$, $r >0$, $\eps \in (0,2]$. Take $x,y \in Y$ such that $d(a,x) \le r$, $d(a,y) \le r$ and 
  $d(x,y) \ge \eps r$. Then $1 - d(a,m(x,y))/r \ge 1 - (1 - k\eps^p/8)^{1/p}$. This implies (ii).
To see that (i) is satisfied, consider $a,x,y \in Y$ and $M$ as in \eqref{uc-KarMar}.
Then $d(a,m(x,y)) /M\le (1 - (k/8)(d(x,y)/M)^p)^{1/p}$. For $t \in [0,1]$, take 
$\ds g(t) = (1 - k(2t)^p/8)^{1/p}$.  Then (i) holds.
Let $\eps > 0$ and $\rho_p(\eps) = 1 -(1-k\eps^p/8)^{1/p}$. 
One gets that \eqref{uc-Kel14} holds, hence (iii) 
is satisfied (see also \cite[Example, p. 361]{Kel14}).

For the rest of this paper we assume that any weakly uniformly convex geodesic space also 
satisfies condition (\ref{cond-wuc-inf}). From the above arguments it is easy to see that 
all uniformly convex spaces described in (i)-(iii) satisfy (\ref{cond-wuc-inf}). Although 
we state our main result in the setting of weakly uniformly convex 
geodesic spaces that additionally satisfy a nonpositive curvature assumption, we remark 
that what we actually use in the proof is the following convexity condition which holds 
in any weakly uniformly convex geodesic space.

\subsection{A convexity assumption}

We say that a  geodesic space $Y$ has  {\it property $(C)$}
if there exists  a mapping $\Psi:Y\times (0,\infty)\times(0,2]\rightarrow (0,1]$ satisfying:
\be
\item[(C1)] for all $y\in Y, r>0, \varepsilon\in(0,2]$, every $x,z\in Y$ with $d(x,y)=r$, $d(y,z) \ge r$, 
if $w$ belongs to a geodesic segment $[y,z]$ and $d(y,w)=r$,  then
\[r + d(x,z) \le d(y,z) +  \Psi(y,r,\eps)r \quad  \text{ implies} \quad d(w,x) \le \eps r.\]
\item[(C2)] for all $y \in Y, \eps \in (0,2]$, there exists $s > 0$ such that $\inf_{r\ge s} \Psi(y,r,\eps) > 0$.
\ee

\begin{lemma} \label{lemma-wuc-prop}
Any weakly uniformly convex geodesic space $Y$ has property $(C)$ with 
$\Psi(y,r,\eps)=\delta_Y(y,r,\eps)$ for all $y \in Y$, $r > 0$ and $\eps \in (0,2]$.
\end{lemma}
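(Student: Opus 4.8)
The plan is to verify the two clauses of property $(C)$ separately for the choice $\Psi = \delta_Y$, the substantive content lying entirely in (C1). Clause (C2) is immediate: with $\Psi(y,r,\eps) = \delta_Y(y,r,\eps)$ it is literally the standing assumption \eqref{cond-wuc-inf}. So I would concentrate on (C1) and argue by contradiction. Fixing $y,r,\eps$ and points $x,z,w$ as in the statement (note $w$ exists on $[y,z]$ since $d(y,z)\ge r$), I set $\eta = \delta_Y(y,r,\eps)$, which is $>0$ by weak uniform convexity, assume the hypothesis $r + d(x,z) \le d(y,z) + \eta r$, and suppose for contradiction that $d(w,x) > \eps r$.

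The first step is to feed the pair $x,w$ into the modulus of convexity with center $y$. Since $d(y,x) = d(y,w) = r$ and $d(x,w) \ge \eps r$, the definition of $\delta_Y$ gives $d(y,m(x,w)) \le (1-\eta)r$ for the (unique) midpoint $m := m(x,w)$. I would then bound $d(m,z)$ from two sides. From below, the triangle inequality yields $d(m,z) \ge d(y,z) - d(y,m) \ge d(y,z) - (1-\eta)r$. From above, I invoke strict convexity, which holds in any weakly uniformly convex space as recalled in the excerpt, with center $z$: because $d(w,x)>\eps r>0$ forces $x \ne w$, we get the \emph{strict} bound $d(z,m) < \max\{d(z,x), d(z,w)\}$. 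Since $w \in [y,z]$ with $d(y,w)=r$ forces $d(w,z) = d(y,z) - r$, and since $\eta r > 0$ gives $d(w,z) = d(y,z)-r < d(y,z) - (1-\eta)r$, the maximum cannot be realized by $d(w,z)$ and must therefore be $d(x,z)$. Chaining the two bounds then produces $d(x,z) > d(y,z) - (1-\eta)r$, that is $r + d(x,z) > d(y,z) + \eta r$, contradicting the hypothesis and so yielding $d(w,x)\le \eps r$.

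The step I expect to be most delicate is precisely the upper bound on $d(m,z)$. The tempting move is a full midpoint-convexity inequality $d(m,z) \le \tfrac12(d(x,z) + d(w,z))$, but that is a Busemann-type property not available in a general weakly uniformly convex space; here only convexity of balls, hence the non-strict bound $d(m,z) \le \max\{d(x,z),d(w,z)\}$, is at hand. With that non-strict estimate alone the argument would merely reach the borderline equality $r + d(x,z) = d(y,z) + \eta r$ and fail to contradict the (non-strict) hypothesis. What rescues the proof is that weak uniform convexity upgrades ball convexity to \emph{strict} convexity, so for $x \ne w$ the inequality $d(z,m) < \max\{d(z,x),d(z,w)\}$ is strict; this strictness is exactly what converts the borderline equality into a genuine contradiction and closes the argument, including the degenerate case $d(y,z)=r$ (where $w=z$).
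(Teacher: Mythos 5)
Your proof is correct and is essentially the paper's own argument run contrapositively: both hinge on the same ingredients — the midpoint $m(x,w)$, strict convexity with center $z$, the triangle inequality through $y$, $m$, $z$, and the modulus of convexity with center $y$. The only cosmetic difference is that the paper applies the modulus with the actual value $d(x,w)/r$ and concludes via monotonicity of $\delta_Y$ in its third argument, whereas you suppose $d(x,w)>\eps r$ and apply the modulus with $\eps$ directly, reaching the same chain of inequalities in the opposite order (and your observation that the max must be realized by $d(x,z)$ replaces the paper's bounding of both terms of the max by $d(z,w)+\delta_Y(y,r,\eps)r$).
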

\begin{proof} 
Define $\Psi:=\delta_Y$ and let $y,r,\eps,x,z,w$ as in (C1). We may assume that $w \ne x$. By hypothesis, 
$d(x,z) \le d(y,z) - r + \delta_Y(y,r,\eps)r = d(z,w) + \delta_Y(y,r,\eps)r$. 
Denote $m(w,x)$ by $m$. Then $d(m,z) < \max\{d(z,x), d(z,w)\}\leq d(z,w) + \delta_Y(y,r,\eps)r$, hence
$d(y,z) - d(y,m) < d(z,w) + \delta_Y(y,r,\eps)r$, from where $d(y,m) >
d(y,w)-\delta_Y(y,r,\eps)r=(1-\delta_Y(y,r,\eps))r$. By weak uniform convexity, 
$d(y,m) \le \left(1-\delta_Y\left(y,r,d(x,w)/r\right)\right)r$, hence 
$\delta_Y\left(y, r,d(x,w)/r\right) < \delta_Y(y, r,\eps)$.
As $\delta_Y$ is nondecreasing in the third argument, we get $d(x,w) \le \eps r$.\\
(C2) follows immediately from \eqref{cond-wuc-inf}.
\end{proof}

\subsection{An example} \label{subsection-example}

In the sequel we give an example of a weakly uniformly convex space satisfying \eqref{cond-wuc-inf}
which is not uniformly convex in the sense of Karlsson and Margulis. We refer to \cite{BriHae99} 
for all the undefined notions.

In the construction of our example, we glue spaces through points. Assume that $(X,d_X)$ and $(Y,d_Y)$ are 
geodesic spaces, $\theta \in X$ and $\tau \in Y$. Let $Z$ be the quotient of the disjoint union $X \sqcup Y$ by the 
equivalence relation generated by $[\theta \sim \tau]$. We identify $X$ and $Y$ with their images 
in $Z$, write $Z = \gluexy$ and refer to it as the {\it gluing} of $X$ and $Y$ obtained by identifying the points $\theta, \tau$.  
This is a geodesic metric space with the gluing metric 
\[d(x,y)=
\begin{cases}
d_X(x,y), & \mbox{if } x,y\in X\\
d_Y(x,y), & \mbox{if } x,y\in Y\\
d_X(x,\theta) + d_Y(\tau,y), & \mbox{if } x\in X, y \in Y.
\end{cases}
\]

\begin{lemma} \label{lemma-gluing}
Assume that $(X,d_X)$ and $(Y,d_Y)$ are weakly uniformly convex geodesic spaces with monotone 
moduli  $\eta_X$ and $\eta_Y$, respectively. For every $\theta\in X$, $\tau\in Y$, $\gluexy$ 
is weakly uniformly convex with a modulus 
\bua
\eta(a,r,\eps)=\begin{cases}
\min\{\eta_X(a,r,\eps), \eta_Y(\tau,r,\eps)\eps/2, \eps/4, \eta_X(a,r,\varepsilon/4)
\}, & \text{ if } a\in X\\
\min\{\eta_Y(a,r,\eps), \eta_X(\theta,r,\eps)\eps/2, \eps/4, \eta_Y(a,r,\varepsilon/4)
\}, & \text{ if } a\in Y.
\end{cases}
\eua
\end{lemma}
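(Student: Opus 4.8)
The plan is to verify the modulus inequality directly from the definition, splitting into cases according to which factor ($X$ or $Y$) contains the center $a$ and each of the two points $x,y$. Since the construction is symmetric, I would treat $a\in X$ in full and obtain $a\in Y$ by interchanging $X\leftrightarrow Y$ and $\theta\leftrightarrow\tau$. First I would record two structural facts about the gluing metric: a geodesic joining two points of the same factor stays in that factor, so $\gluexy$ is uniquely geodesic and the midpoint of two such points is computed inside that factor; and a geodesic joining $x\in X$ to $y\in Y$ is the concatenation of $[x,\theta]\subseteq X$ with $[\tau,y]\subseteq Y$, so its midpoint $m$ lies in $X$ or in $Y$ according as $d_X(x,\theta)\ge d_Y(\tau,y)$ or not, at distance $|d_X(x,\theta)-d_Y(\tau,y)|/2$ from the gluing point. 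Throughout I would use that, to establish $d(a,m)\le(1-\eta(a,r,\eps))r$, it suffices to exhibit in each case a single one of the four quantities in the minimum, say $t$, with $d(a,m)\le(1-t)r$.

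Fix $a\in X$, $r>0$, $\eps\in(0,2]$ and $x,y$ with $d(a,x),d(a,y)\le r$, $d(x,y)\ge\eps r$. If $x,y\in X$, the midpoint is the $X$-midpoint and weak uniform convexity of $X$ gives the bound with $t=\eta_X(a,r,\eps)$. If $x,y\in Y$, then $d_Y(\tau,x),d_Y(\tau,y)\le r$, $d_Y(x,y)\ge\eps r$, the midpoint $m\in Y$, and $d(a,m)=d_X(a,\theta)+d_Y(\tau,m)$. Writing $R=\max\{d_Y(\tau,x),d_Y(\tau,y)\}$, weak uniform convexity of $Y$ at $\tau$ applied at radius $R$, combined with monotonicity of $\eta_Y$, yields $d_Y(\tau,m)\le(1-\eta_Y(\tau,r,\eps))R$; since $d_X(a,\theta)+R=\max\{d(a,x),d(a,y)\}\le r$ and $R\ge d_Y(x,y)/2\ge\eps r/2$, I would conclude $d(a,m)\le r-\eta_Y(\tau,r,\eps)R\le(1-\eta_Y(\tau,r,\eps)\eps/2)r$, i.e. $t=\eta_Y(\tau,r,\eps)\eps/2$. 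This is the one place where the monotonicity hypothesis is genuinely used.

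The mixed case $x\in X$, $y\in Y$ is the heart of the matter, and I would split it by the location of $m$. Put $q=d_X(x,\theta)$, $s=d_Y(\tau,y)$, so $q+s=d(x,y)\ge\eps r$ and $d_X(a,\theta)\le r-s$ (from $d(a,y)\le r$). If $m\in Y$ (that is $s\ge q$), then $d(a,m)=d_X(a,\theta)+(s-q)/2\le(r-s)+(s-q)/2=r-(q+s)/2\le(1-\eps/2)r$, giving $t=\eps/4$ with room to spare. If $m\in X$ (that is $q\ge s$, whence $q\ge\eps r/2$), then $m$ lies on $[x,\theta]$ at distance $(q+s)/2\ge q/2$ from $x$, i.e. on the subsegment from $m_X(x,\theta)$ to $\theta$, and I would distinguish two regimes. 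When $s\ge\eps r/4$: weak uniform convexity of $X$ applied to $x,\theta$ (separation $q\ge\eps r/2\ge(\eps/4)r$) gives $d(a,m_X(x,\theta))\le(1-\eta_X(a,r,\eps/4))r$, while $d(a,\theta)\le r-s\le(1-\eps/4)r$; as $m$ lies on the segment between these two points, ball-quasiconvexity in $X$ forces $d(a,m)$ to be at most the maximum of the two bounds, hence $\le(1-t)r$ with $t=\min\{\eta_X(a,r,\eps/4),\eps/4\}$. When $s<\eps r/4$: choose $x''\in[x,\theta]$ with $d_X(x,x'')=s$, so that $m$ is exactly the $X$-midpoint of $x''$ and $\theta$, with $d_X(x'',\theta)=q-s\ge\eps r/2-\eps r/4=(\eps/4)r$ and $d_X(a,x''),d_X(a,\theta)\le r$; weak uniform convexity of $X$ then yields $d(a,m)\le(1-\eta_X(a,r,\eps/4))r$, i.e. $t=\eta_X(a,r,\eps/4)$. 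In every case the exhibited $t$ is one of the four terms defining $\eta(a,r,\eps)$ (or dominates $\eps/4$), completing the $a\in X$ analysis; the case $a\in Y$ is identical after swapping the roles of the two factors.

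The main obstacle I anticipate is precisely the mixed case with $m\in X$: a crude triangle-inequality bound through $\theta$ degrades badly when $m$ sits near the $X$-midpoint of $[x,\theta]$ (intermediate $s$), while a single application of weak uniform convexity is too weak when $m$ is close to $\theta$ (the relevant separation $q-s$ shrinks to $0$). The resolution I expect to be essential is to cover the subsegment from $m_X(x,\theta)$ to $\theta$ by combining ball-quasiconvexity in $X$ near the midpoint (where one endpoint estimate comes from convexity and the other from the cheap bound $d(a,\theta)\le r-s$) with a direct representation of $m$ as an honest midpoint of two $(\eps/4)r$-separated points of $X$ near $\theta$; the threshold $s\ge\eps r/4$ versus $s<\eps r/4$ is exactly what makes both halves produce the parameter $\eps/4$. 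Checking that the two factors glue to a uniquely geodesic space and that the midpoint of a mixed pair is located as claimed is routine, and I would dispatch it first.
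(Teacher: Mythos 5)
Your proposal is correct and follows essentially the same route as the paper's proof: the same case analysis on the locations of $x$, $y$ and the midpoint $m$, the same use of monotonicity of $\eta_Y$ when $x,y\in Y$ (applying convexity at radius $R=M\le r$ and passing to $\eta_Y(\tau,r,\varepsilon)$), and in the mixed case your auxiliary point $x''$ is exactly the paper's point $p$, which makes $m$ a genuine midpoint of two $(\varepsilon/4)r$-separated points of $X$. The only differences are cosmetic: the paper splits the mixed case on $d(m,\theta)$ versus $\varepsilon r/8$ instead of $s$ versus $\varepsilon r/4$, and handles the near-$\theta$ regime by a direct triangle inequality through $\theta$ where you invoke ball quasi-convexity between $m_X(x,\theta)$ and $\theta$.
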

\begin{proof}
Let $a\in \gluexy$, $\varepsilon \in (0,2]$, $r > 0$ and $x,y\in \gluexy$ be such that 
$d(a,x), d(a,y) \le r$ and $d(x,y)\geq \eps r$.  Let us denote $m(x,y)$ with $m$, for simplicity.
We assume that $a\in X$, the case $a\in Y$ being similar. 
We distinguish the following cases:
\be
\item $x,y \in X$. Then, since $\eta_X$ is a modulus of weak  uniform convexity for $X$,  $d(a,m)\leq (1-\eta_X(a,r,\eps))r$.
\item $x,y \in Y$.  Let $M:=\max\{d(\theta,x), d(\theta,y)\}$. One can easily see that
$\varepsilon r /2\leq M\leq r$ and that $d(a,\theta)+M\leq r$. By the weak uniform convexity of $Y$, 
$d(\theta,m)=d_Y(\tau,m)\leq (1-\eta_Y(\tau,r,\eps))M$.
It follows then that $d(a,m) \leq (1-\eta_Y(\tau,r,\eps)\eps/2)r$.
\item $x \in X$, $y \in Y$.  If $m\in Y$, then $m\in [\theta,y]$, so $d(\theta,y)=d(\theta,m)+d(m,y)$. 
It follows that $d(a,m) = d(a,\theta)+d(\theta,m)=d(a,y) - d(m,y) \le  (1-\varepsilon/2)r$.\\
If  $m\in X$, so $m\in[x,\theta]$, we have two cases. If $d(m, \theta) < \varepsilon r/8$, then $d(\theta,y) > 3\varepsilon r/8$, so
$d(a,\theta)=d(a,y)-d(\theta,y)\leq (1 - 3\varepsilon/8)r$. Thus,  $d(a,m)\leq (1 - \varepsilon/4)r$.
Suppose now that  $d(m,\theta) \ge \varepsilon r/8$ and let  $p\in [x,m]$ be such that $m$ is the midpoint of 
the segment $[p,\theta]$. Then $d(a,p)\le r$ and, since $d(a,\theta)\le r$ and $d(p,\theta)\geq \eps r/4$, apply the fact that 
$X$ is weakly uniformly convex to get that $d(a,m) \leq (1-\eta_X(a,r,\varepsilon/4))r$.
\ee
\end{proof}
\noindent As a consequence, if $X$ and $Y$ have moduli $\eta_X, \eta_Y$ which depend only on $\eps$, then $\gluexy$
has a modulus $\eta$ with the same property.

Consider the $2$-dimensional sphere $\mathbb{S}^2$ which is a geodesic space when endowed with the distance 
$d_{\mathbb{S}^2}(x,y)=\arccos\left(x\mid y\right)$, where $(\cdot\mid \cdot)$ is the Euclidean scalar product. 
Let 
\[b = \left(\sqrt{3}/2, 1/2, 0\right), c = \left(0, 1, 0\right), a_n = \left(1/(2n), \sqrt{3}/(2n), \sqrt{1 - 1/n^2}\right).\] 
Then $m(b,c)= \left(1/2, \sqrt{3}/2, 0\right)$, $d_{\mathbb{S}^2}(b,c) =\pi/3$, 
$d_{\mathbb{S}^2}(a_n,b) = d_{\mathbb{S}^2}(a_n,c) 
= \arccos \left(\sqrt{3}/(2n)\right)$ and $d_{\mathbb{S}^2}\left(a_n,m(b,c)\right) = \arccos \left(1/n\right)$ for all
$n \in \mathbb{N}, n \ge 2$.

Denote by $\Delta_n$ the geodesic triangle $\Delta(a_n,b,c)$. Since we will glue these triangles, 
we denote the points $b$ and $c$ by $b_n$ and $c_n$, respectively. Note that, for each $n \ge 2$, $\Delta_n$ is 
a $2$-uniformly convex geodesic space. In fact, Ohta proved in \cite{Oht07} that, for $\kappa>0$, 
any CAT$(\kappa)$ space $X$  with 
$\text{diam}(X) <\pi/(2\sqrt{\kappa})$ is $2$-uniformly convex with parameter 
$k=(\pi-2\sqrt{\kappa}\,\sigma)\,\tan(\sqrt{\kappa}\,\sigma)$ for any 
$0<\sigma\leq\pi/(2\sqrt{\kappa})-\text{diam}(X)$. In particular, since $\Delta_n$ is a 
CAT(1) space and $\text{diam}(\Delta_n) < \pi/2$ it follows, by the argument given in Section \ref{section-wuc}, 
that it  has  a modulus of uniform convexity  given by 
$\delta_{\Delta_n}(\varepsilon) = \sqrt{1 - (1 - k\varepsilon^2/8)}$.

Glue $\Delta_n$ and $\Delta_{n+1}$ successively through a point by identifying 
$b_n$ with $a_{n+1}$ in order to obtain a chain of triangles $Y = \sum_{n \ge 2}\Delta_n$.  
More precisely, one considers first the gluing $Y_1:=\Delta_2 \sqcup_{b_2} \Delta_3$, then 
one forms $Y_2:=Y_1\sqcup_{b_3} \Delta_4$  and so on, 
for general $n\geq 2$, $Y_{n+1}:=Y_n\sqcup_{b_{n+2}}\Delta_{n+3}$. 
One glues in this way the sequence of triangles $(\Delta_n)$ obtaining the unbounded 
geodesic space $Y$. Applying repeatedly  Lemma \ref{lemma-gluing},  we get that for every $n\ge 1$, 
$Y_n$ is uniformly convex, with a modulus of uniform convexity $\eta_{Y_n}$ which depends only on $\eps$. 

\begin{proposition}
$Y$ is a weakly uniformly convex geodesic space satisfying \eqref{cond-wuc-inf}.
\end{proposition}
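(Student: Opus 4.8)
The plan is to verify the three assertions in turn: that $Y$ is a geodesic space, that it is weakly uniformly convex, and that it satisfies \eqref{cond-wuc-inf}. The first is essentially automatic: $Y=\bigcup_n Y_n$ is an increasing union of the spaces $Y_n$, each of which is a finite gluing of geodesic triangles through points and hence a geodesic space; any two points of $Y$ lie in a common $Y_n$, and distances are realized by concatenating geodesics across the gluing points (which are cut points), so $Y$ is geodesic (cf.\ \cite{BriHae99}). Since each piece is uniquely geodesic and the gluing is tree-like, midpoints are well defined.

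For weak uniform convexity the key observation is that bounded sets meet only finitely many triangles. Crossing any triangle $\Delta_n$ along the chain costs at least $d_0:=\arccos(\sqrt3/4)>0$, since $d(a_n,b_n)=\arccos(\sqrt3/(2n))$ is increasing in $n$; hence for fixed $a\in Y$ and $r>0$ the closed ball $\bar B(a,r)$ is contained in some $Y_N$ with $N=N(a,r)$. Because each gluing is through a single point, that point is a cut point and $Y_N$ is a geodesically convex subset of $Y$, so geodesics, and hence midpoints, between points of $Y_N$ stay in $Y_N$. Given $x,y$ with $d(a,x),d(a,y)\le r$ and $d(x,y)\ge\eps r$, both $x,y$ and $m(x,y)$ lie in $Y_N$, where I may invoke the uniform convexity of $Y_N$ with its modulus $\eta_{Y_N}(\eps)$ that depends only on $\eps$. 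This yields $d(a,m(x,y))\le(1-\eta_{Y_N}(\eps))r$, so $\eta(a,r,\eps):=\eta_{Y_N}(\eps)$ is a modulus of weak uniform convexity; note that it genuinely depends on $a$ and $r$, which is why $Y$ fails to be uniformly convex.

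The condition \eqref{cond-wuc-inf} is the heart of the matter, and the obstacle is precisely that the per-triangle moduli degrade ($\eta_{Y_N}(\eps)\to 0$ as $N\to\infty$, since $\text{diam}(\Delta_n)\to\pi/2$), so one cannot simply reuse the previous bound as $r\to\infty$. To get around this I would exploit the large-scale tree-like geometry of the chain. Let $P$ be the geodesic ray obtained by concatenating the sides $[a_n,b_n]$ through the cut points $b_n=a_{n+1}$, and let $\rho:Y\to[0,\infty)$ send a point to the arc-length of its nearest-point foot on $P$. Since every triangle has diameter at most $D:=\pi/2$ and is attached to $P$ along one of its sides, one checks that $d(p,P)\le D$ for all $p$, whence $\bigl|\,d(p,q)-|\rho(p)-\rho(q)|\,\bigr|\le 2D$ for all $p,q\in Y$; that is, $\rho$ realizes $Y$ as a $(1,2D)$-quasi-isometric copy of a ray.

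With this height function the estimate becomes one-dimensional. Fix $a$ and $\eps$, take $r$ large with $x,y$ satisfying $d(a,x),d(a,y)\le r$ and $d(x,y)\ge\eps r$, and write $\alpha=\rho(a)$, $\xi=\rho(x)$, $\upsilon=\rho(y)$, $\mu=\rho(m(x,y))$. The quasi-isometry inequalities give $|\alpha-\xi|,|\alpha-\upsilon|\le r+2D$, $|\xi-\upsilon|\ge\eps r-2D$, and, once $r$ is large enough, $\mu$ lies within $2D$ of the midpoint $\tfrac{\xi+\upsilon}{2}$. Setting $\beta=\alpha-\tfrac{\xi+\upsilon}{2}$ and $\gamma=\tfrac{\upsilon-\xi}{2}$ and using the identity $\max(|\beta+\gamma|,|\beta-\gamma|)=|\beta|+|\gamma|$, I obtain $|\beta|\le(r+2D)-|\gamma|\le(1-\eps/2)r+3D$, hence $d(a,m(x,y))\le|\alpha-\mu|+2D\le(1-\eps/2)r+O(D)$. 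Choosing $s$ of order $D/\eps$ (large enough both to force $\mu$ between $\xi$ and $\upsilon$ up to $O(D)$ and to absorb the additive constant) gives $\delta_Y(a,r,\eps)\ge\eps/4$ for all $r\ge s$, which is \eqref{cond-wuc-inf}. The one genuinely delicate point, and the step I expect to require the most care, is establishing the quasi-isometry estimate $\bigl|\,d(p,q)-|\rho(p)-\rho(q)|\,\bigr|\le 2D$ uniformly (in particular that the foot map $p\mapsto\bar p$ behaves well across the cut points); once that is in hand, the remainder is elementary real-number bookkeeping that is insensitive to the deteriorating per-triangle convexity.
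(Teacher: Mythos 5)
Your proposal is correct, and it splits into two halves relative to the paper. The first half (that $Y$ is geodesic and weakly uniformly convex because every ball $\bar B(a,r)$ lies in some $Y_N$, whose gluing points are cut points, so one can invoke the modulus $\eta_{Y_N}(\eps)$ coming from repeated application of Lemma~\ref{lemma-gluing}) is exactly the paper's argument. For condition \eqref{cond-wuc-inf}, however, you take a genuinely different route. The paper stays local: with $a\in Y_i$ it sets $s=(i+1)\pi/\eps$, notes that $d(x,y)\ge \eps r\ge (i+1)\pi$ exceeds the diameter of $Y_i$, so some endpoint, say $y$, and the midpoint $m(x,y)$ lie beyond the cut point bounding $Y_i$; the cut-point structure then forces $m(x,y)\in[a,y]$, and the computation of case (iii) of Lemma~\ref{lemma-gluing} gives $d(a,m(x,y))\le(1-\eps/2)r$ directly, i.e. $\delta_Y(a,r,\eps)\ge\eps/2$ for $r\ge s$. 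You instead globalize the same cut-point fact into the statement that the arc-length coordinate $\rho$ along the concatenated sides $[a_n,b_n]$ is a $(1,2D)$-quasi-isometry onto a ray, and then run one-dimensional bookkeeping. Your version is sound (the foot of $p\in\Delta_m$ can indeed be taken in $[a_m,b_m]$, and the additive error $2D$ follows from the triangle inequality at the two endpoints), and it buys something slightly stronger: your threshold $s$ is of order $D/\eps$ independently of $a$, so you get $\inf_{r\ge s}\delta_Y(a,r,\eps)\ge\eps/4$ uniformly in the center, which is consistent with the paper's non-uniform-convexity proposition because that counterexample lives at the fixed small scale $r=\pi/2$. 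The costs are a worse constant ($\eps/4$ versus $\eps/2$) and the burden of actually proving the quasi-isometry estimate, which rests on precisely the cut-point decomposition the paper uses pointwise; also, one small slip: with your constants $m(x,y)$ projects within $3D$ (not $2D$) of $\frac{\xi+\upsilon}{2}$, but since you only use $O(D)$ and absorb it into the choice of $s$, this is harmless.
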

\begin{proof}
Let $a\in Y$, $\varepsilon \in (0,2],r > 0$ and $x,y\in Y$ be such that $d(a,x),d(a,y) \le r$ 
and $d(x,y)\geq \eps r$. If $i$ is such that $a\in Y_i$, then $a,x,y\in Y_N$, 
with $N = i +\lceil r/\arccos\left(\sqrt{3}/(2i+4)\right) \rceil$. Thus, $d(a,m(x,y))\leq (1-\eta_{Y_N}(\eps))r$. 

We prove now that \eqref{cond-wuc-inf} holds. Take $s = (i+1)\pi/\varepsilon$ and $r \ge s$. 
Note that since $d(x,y) \ge \varepsilon r$, it follows that $d(x,y) \ge (i+1)\pi$. 
At least one of the points $x$ and $y$ does not belong to 
$Y_i$ and we may assume that this point is $y$. One can also easily see that 
$m(x,y)$ does not belong to $Y_i$ either and that $m(x,y) \in [a,y]$. Thus, as in 
the proof of Lemma \ref{lemma-gluing}, case (iii), we have that $d(a,m(x,y)) \le (1-\varepsilon/2)r$. 
Therefore, $\inf_{r \ge s} \delta_Y(a,r,\varepsilon) \ge \varepsilon/2$.
\end{proof}

\begin{proposition}
$Y$ does not admit a modulus of uniform convexity that does not depend on the center of balls.
\end{proposition}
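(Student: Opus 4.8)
The plan is to argue by contradiction. Suppose that $Y$ admitted a modulus of uniform convexity $\eta=\eta(\eps)$ not depending on the center of balls, i.e.\ for every $a\in Y$, $r>0$, $\eps\in(0,2]$ and all $x,y$ with $d(a,x),d(a,y)\le r$ and $d(x,y)\ge\eps r$ one would have $d(a,m(x,y))\le(1-\eta(\eps))r$ with $\eta(\eps)>0$. I would then exhibit, for a single fixed value of $\eps$, a sequence of configurations running along the chain $Y=\sum_{n\ge 2}\Delta_n$ whose convexity defect $1-d(a,m(x,y))/r$ tends to $0$, forcing $\eta(\eps)=0$ and giving the desired contradiction.

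The natural candidates are the triangles themselves: for each $n\ge 2$ take the center to be the apex $a_n$ and the two points to be $x=b_n$, $y=c_n$. Using the distances already recorded, set $r_n=d(a_n,b_n)=d(a_n,c_n)=\arccos\!\left(\sqrt{3}/(2n)\right)$, so that both $b_n$ and $c_n$ lie in $\ol{B}(a_n,r_n)$. Since $d(b_n,c_n)=\pi/3$ is constant while $r_n\uparrow\pi/2$, the ratio $d(b_n,c_n)/r_n=(\pi/3)/\arccos\!\left(\sqrt{3}/(2n)\right)$ decreases to $2/3$ and stays strictly above it; hence, fixing $\eps=2/3$, the hypothesis $d(b_n,c_n)\ge\eps r_n$ holds for every $n\ge 2$. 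On the other hand $d(a_n,m(b_n,c_n))=\arccos(1/n)$, so the defect is
\[
1-\frac{d(a_n,m(b_n,c_n))}{r_n}=1-\frac{\arccos(1/n)}{\arccos\!\left(\sqrt{3}/(2n)\right)},
\]
and, since both $\arccos(1/n)$ and $\arccos\!\left(\sqrt{3}/(2n)\right)$ converge to $\pi/2$ as $n\to\infty$, this quantity tends to $0$. This is the crux: the center $a_n$ can be pushed so that the two endpoints stay a fixed proportion $\eps=2/3$ of the radius apart, while the midpoint is dragged almost all the way to the boundary of the ball. Combining the two estimates, for $\eps=2/3$ one would get $0<\eta(2/3)\le 1-d(a_n,m(b_n,c_n))/r_n\to 0$, which is impossible.

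The step I expect to require the most care is not the asymptotics --- these follow directly from the explicit coordinates and the continuity of $\arccos$ --- but rather verifying that the distances and the midpoint used above are genuinely those of the glued space $Y$ and not merely those of the isolated triangle $\Delta_n$. For this I would observe that the four relevant points $a_n,b_n,c_n$ and $m(b_n,c_n)$ all lie in the single piece $\Delta_n$, that the geodesic segment $[b_n,c_n]$ and its midpoint lie in $\Delta_n$, and that any path in $Y$ leaving $\Delta_n$ must pass through one of the gluing points $a_n=b_{n-1}$ or $b_n=a_{n+1}$ and return, which cannot shorten distances; hence $d_Y$ restricted to $\Delta_n$ coincides with the intrinsic metric of $\Delta_n$, and the recorded values apply verbatim. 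Finally, since $Y$ is weakly uniformly convex it is uniquely geodesic, so $m(b_n,c_n)$ is the unique midpoint and the computation is unambiguous.
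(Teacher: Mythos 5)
Your witness configurations are exactly the paper's (the apexes $a_n$ against the fixed base $[b_n,c_n]$, with $\varepsilon=2/3$), and all of your distance computations are correct; your side remarks are also fine (the gluing metric restricts to the intrinsic metric of each $\Delta_n$ by the very definition of the gluing metric, and unique geodesicity disposes of any ambiguity in the midpoint). However, there is a genuine gap between what you prove and what the proposition asserts. In this paper a \emph{modulus of uniform convexity} is a modulus $\eta(r,\varepsilon)$ that is independent of the center $a$ but is still allowed to depend on the radius $r$; the proposition rules out all such moduli. You assume from the outset a modulus $\eta(\varepsilon)$ depending on $\varepsilon$ alone, and your contradiction genuinely needs this restriction: because you normalize the $n$-th configuration by its own radius $r_n=\arccos\left(\sqrt{3}/(2n)\right)$, which varies with $n$, the only conclusion available from the vanishing defect is that $\eta(r_n,2/3)\to 0$ along the sequence $r_n\uparrow\pi/2$. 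That is not a contradiction for a center-independent but radius-dependent modulus: a function can be strictly positive at every fixed $(r,\varepsilon)$ and still have infimum zero along a sequence of distinct radii. So, as written, your argument rules out moduli of the type appearing in definitions (i) and (ii) of Section 2 (which is enough for the informal goal of showing $Y$ is not uniformly convex in the sense of Karlsson and Margulis), but not a modulus $\eta(r,\varepsilon)$ in the paper's sense.

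The repair is one line, and it is precisely what the paper does: keep the radius fixed. Since $d(a_n,b_n)=d(a_n,c_n)=\arccos\left(\sqrt{3}/(2n)\right)\le\pi/2$ and $d(b_n,c_n)=\pi/3=(2/3)\cdot(\pi/2)$, each triple $(a_n;b_n,c_n)$ is admissible for the \emph{fixed} parameters $r=\pi/2$, $\varepsilon=2/3$ --- the ball condition is an inequality, so the points need not lie on the sphere of radius $r$. The defect relative to this fixed radius is $1-\arccos\left(1/n\right)/(\pi/2)\to 0$, which forces $\eta(\pi/2,2/3)=0$ for any center-independent modulus, the desired contradiction. In short: same points, same $\varepsilon$, but divide by $\pi/2$ rather than by $r_n$.
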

\begin{proof}
Let $r = \pi/2, \varepsilon = 2/3$ and $\delta \in (0,1]$ be arbitrary. 
Take $n$ sufficiently large such that $\arccos\left(1/n\right) > (1-\delta)\pi/2$. Then $d(b_n,c_n)=\varepsilon r$, 
$d(a_n,b_n) = d(a_n,c_n) \le r$, while $d(a_n, m(b_n,c_n))> (1-\delta)r$.
\end{proof}

\section{Proof of the main theorem}\label{section-proof-main-thm-detailed}

We denote $d(y,a_n(x)y)$ by $\dnxy$ and $d(y,a_{n-k}(T^kx)y)$ by $\dnxyk$. \\
Let $E$ be the set of points $x \in X$ for which taking any $\varepsilon > 0$ there exist 
$M \in \mathbb{N}$ and infinitely many $n \in \mathbb{N}$ such that for every  $k \in [M,n]$,
\[\dnxy - \dnxyk \ge (A - \varepsilon)k.\]
By \cite[Proposition 4.2]{KarMar99} we know that $\mu(E) = 1$.

\begin{lemma} \label{lemma-Ki-ni}
Let $x \in E$ such that $\displaystyle \lim_{n \to \infty}\dnxy/n = A > 0$. Then 
for all sequences $(\alpha_i) \subseteq (0,1], \,  (p_i) \subseteq \mathbb{N}$, there exist sequences 
$(K_i) \subseteq \mathbb{N}, \,(n_i) \subseteq \mathbb{N}$ satisfying for all $i\ge 1$:
\be
\item\label{lemma-Ki-ni-1} $p_i \le K_i$, $n_i\in (K_{i+1},n_{i+1})$ and  $\dniuxy \ge \max\{ \dnixy, An_i\}$;
\item\label{lemma-Ki-ni-2} for all $k \in [K_i, n_i]$, $\left|\dkxy-Ak\right| \le Ak/2^i$ and  
\begin{equation} \label{lemma-Ki-ni-iii-2}
\left(1- \min\{1/2^i,\alpha_i\}\right)\dkxy + d(a_k(x)y,a_{n_i}(x)y) \le  \dnixy.
\end{equation}
\ee
\end{lemma}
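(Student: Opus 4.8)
The plan is to build the two sequences by an interleaved induction, using as the sole algebraic tool the cocycle identity $a_n(x) = a_k(x)\,a_{n-k}(T^kx)$ for $k \le n$. Since each $a_k(x)$ is a composition of nonexpansive maps it is itself nonexpansive, whence
\[
d\bigl(a_k(x)y, a_n(x)y\bigr) = d\bigl(a_k(x)y,\, a_k(x)(a_{n-k}(T^kx)y)\bigr) \le d\bigl(y, a_{n-k}(T^kx)y\bigr) = \dnxyk .
\]
Combined with the defining property of $E$, namely $\dnxy - \dnxyk \ge (A-\eps)k$ for all $k \in [M,n]$, this yields $d(a_k(x)y, a_n(x)y) \le \dnxy - (A-\eps)k$, the estimate that will drive \eqref{lemma-Ki-ni-iii-2}.

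Next I would fix, for each $i$, the gap parameter $\beta_i := \min\{1/2^i,\alpha_i\}$, which is strictly positive because $\alpha_i \in (0,1]$, and choose $\eps_i := \beta_i A/2 \in (0,\beta_i A)$. Applying $x \in E$ with $\eps = \eps_i$ produces a threshold $M_i$ and infinitely many admissible $n$. For such an $n$ and any $k \in [M_i,n]$ the preceding paragraph gives $d(a_k(x)y, a_n(x)y) \le \dnxy - (A-\eps_i)k$, so \eqref{lemma-Ki-ni-iii-2} (read with $n_i = n$) follows once $(1-\beta_i)\dkxy \le (A-\eps_i)k$. Since $\dkxy/k \to A$ and $(1-\beta_i)A < A-\eps_i$ (which is exactly the content of $\eps_i < \beta_i A$), there is a threshold past which $(1-\beta_i)\dkxy/k \le A-\eps_i$; enlarging it I also secure $|\dkxy - Ak| \le Ak/2^i$. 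Taking $L_i$ to be the maximum of this threshold, of $p_i$ and of $M_i$, every $k \ge L_i$ then fulfils both requirements in part~\ref{lemma-Ki-ni-2}, and $K_i \ge L_i$ additionally forces $p_i \le K_i$ and $[K_i,n_i] \subseteq [M_i,n_i]$.

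With the numbers $L_i$ precomputed I would run the construction in the order $K_1, n_1, K_2, n_2, \dots$. At stage $i$ choose $K_i$ with $L_i \le K_i$, imposing also $K_i < n_{i-1}$ when $i \ge 2$ (feasible because the previous stage will have arranged $n_{i-1} > L_i$); then pick $n_i$ from the infinite admissible set so large that $n_i > K_i$, $n_i > L_{i+1}$, and, for $i \ge 2$, $n_i > n_{i-1}$ together with $\dnixy \ge \max\{\dnimxy, An_{i-1}\}$. Each of these is a finite collection of lower bounds, and since the admissible $n$ are unbounded while $\dnxy \to \infty$ (as $\dnxy/n \to A > 0$), a legitimate choice always exists. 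This delivers $p_i \le K_i$, $K_{i+1} < n_i < n_{i+1}$ and $\dniuxy \ge \max\{\dnixy, An_i\}$, i.e. part~\ref{lemma-Ki-ni-1}, while part~\ref{lemma-Ki-ni-2} holds on $[K_i,n_i]$ by the definition of $L_i$.

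The one genuinely delicate point is the reduction of \eqref{lemma-Ki-ni-iii-2} to $(1-\beta_i)\dkxy \le (A-\eps_i)k$ when $\alpha_i$ is small. The crude bound $\dkxy \le (1+1/2^i)Ak$ does not suffice, since $(1-\alpha_i)(1+1/2^i)$ may exceed $1$; the remedy is to exploit the strict gap $\beta_i A > 0$ by demanding that $\dkxy/k$ approximate $A$ more sharply than the reported tolerance $1/2^i$, which costs only a larger $K_i$. This is harmless precisely because $K_i$ is constrained from above solely by $n_{i-1}$, a quantity we are free to inflate at the preceding stage. Everything else is the bookkeeping of the interleaving inequalities, which the chosen order of construction renders routine.
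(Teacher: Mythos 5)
Your proposal is correct and follows essentially the same route as the paper: a level-$i$ tolerance $\eps_i$ chosen so that the $E$-property estimate $\dnixy - \dnixyk \ge (A-\eps_i)k$, combined with nonexpansiveness of $a_k(x)$ (giving $d(a_k(x)y,a_{n_i}(x)y)\le \dnixyk$) and the convergence $\dkxy/k\to A$, yields exactly the reduction $\left(1-\min\{1/2^i,\alpha_i\}\right)\dkxy \le (A-\eps_i)k$, followed by the same threshold-plus-induction selection of $(K_i)$ and $(n_i)$. The only differences are cosmetic: your constant $\eps_i=\min\{1/2^i,\alpha_i\}A/2$ versus the paper's $\min\{A/(1+2^{i+1}), A\alpha_i/(2+\alpha_i)\}$, and your interleaved choice of $K_i$ and $n_i$ versus the paper's fixing all $K_i$ up front and then choosing each $n_i > \max\{n_{i-1},K_{i+1}\}$.
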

\begin{proof}
For $i \ge 1$, take $\eps_i = \min\{A/(1+2^{i+1}), A\alpha_i/(2+\alpha_i)\}$. Then
\begin{equation} \label{lemma-Ki-ni-eq0}
\frac{2\varepsilon_i}{A - \varepsilon_i} \le \min\{1/2^i,\alpha_i\}.
\end{equation}
Since $x \in E$, there exist $M_i$ and infinitely many $n$ satisfying
\begin{equation}\label{lemma-Ki-ni-eq1}
\dnxy - \dnxyk \ge (A - \varepsilon_i)k \qquad \text{ for every }k \in [M_i,n].
\end{equation}
Moreover, since $\ds \lim_{n \to \infty}\dnxy/n = A$, one gets $J_i$ such that for every $k \ge J_i$,
\begin{equation}\label{lemma-Ki-ni-eq2}
(A-\varepsilon_i) k \le \dkxy \le (A + \varepsilon_i)k.
\end{equation}
Take $K_i: = \max\{M_i, J_i, p_i\}$. Then there  are infinitely many $n$ such that 
\eqref{lemma-Ki-ni-eq1} and \eqref{lemma-Ki-ni-eq2} hold for every $k \in [K_i,n]$. 
We define the sequence $(n_i)$ as follows. Take $n_1 > K_1+K_2$ such that \eqref{lemma-Ki-ni-eq1} and 
\eqref{lemma-Ki-ni-eq2} hold for $k \in [K_1,n_1]$. For $i \ge 2$, pick 
$n_i > \max\{n_{i-1}, K_{i+1}\}$ such that $\dnixy \ge \max\{\dnimxy, An_{i-1}\}$ and 
\eqref{lemma-Ki-ni-eq1}, \eqref{lemma-Ki-ni-eq2} hold for $k \in [K_i,n_i]$. 
Then $(K_i), (n_i)$ satisfy \eqref{lemma-Ki-ni-1}. \\
Let $i \ge 1$  and $k \in [K_i,n_i]$.  By \eqref{lemma-Ki-ni-eq2}, $\left|\dkxy-Ak\right| \le \eps_ik \le Ak/2^i$. 
Furthermore, 
\bua
\dkxy+\dnixyk & \le  & \dkxy+\dnixy - (A - \varepsilon_i)k \quad\text{by \eqref{lemma-Ki-ni-eq1}}\\
& \le  & \dnixy + 2\varepsilon_i k \leq \dnixy +  \frac{2\varepsilon_i\dkxy}{A-\varepsilon_i} \quad\text{by \eqref{lemma-Ki-ni-eq2}}\\
& \le & \dnixy +  \min\{1/2^i,\alpha_i\}\dkxy \quad\text{by \eqref{lemma-Ki-ni-eq0}}.
\eua
As $d(a_k(x)y, a_{n_i}(x)y) = d(a_k(x)y, a_k(x)a_{n_i-k}(T^k x)y) \le \dnixyk$,
\eqref{lemma-Ki-ni-iii-2} holds. 
\end{proof}

Recall that a geodesic space $Y$ is {\it Busemann convex} if given any pair of geodesic 
paths $\gamma_1 : [0, l_1] \to Y$ and $\gamma_2 : [0,l_2] \to Y$ with $\gamma_1(0) = \gamma_2(0)$,
 one has $ d(\gamma_1(tl_1),\gamma_2(tl_2)) \le td(\gamma_1(l_1),\gamma_2(l_2))$ for every $t \in [0,1]$.

\subsection{Proof of Theorem \ref{main-erg-thm}}

Let $x \in E$ be such that $\lim_{n \to \infty}\dnxy/n = A$. For every $i \in \mathbb{N}$ there exist
$s_i > 0$ such that $\alpha_i=\inf_{r \ge s_i} \Psi\left(y,r,1/2^i\right) > 0$ (by (C2)) 
and $p_i \in \mathbb{N}$ such that $\dnxy\ge s_i$ for $n \ge p_i$.  
Apply Lemma \ref{lemma-Ki-ni} to obtain the sequences $(K_i)$ and $(n_i)$ satisfying 
properties (i)-(ii) thereof. For each $j\in\N$, we denote by $\gamma_j$ the geodesic path 
that joins $y$ and $a_{n_j}(x)y$ and for each $j,k\in\N$ we let
\[\mjk=\min\{\dkxy,\dnjxy\}.\]
\noindent {\bf Claim 1}: For all  $j\in \mathbb{N}$ and all $k \in [K_j, n_j]$, 
\beq\label{main-thm-claim1}
d(a_k(x)y, \gamma_j(\mjk) \le \frac{\dkxy}{2^j}. 
\eeq
{\bf Proof of claim:} 
If $\dkxy > \dnjxy$, we apply \eqref{lemma-Ki-ni-iii-2} to get that
\bua
d(a_k(x)y,\gamma_{j}(\dnjxy))&=& d(a_k(x)y,a_{n_j}(x)y)  \leq \frac{\dkxy}{2^j}.
\eua
Assume now that $\dkxy \leq\dnjxy$. Since $k \ge p_j$, we get that $\dkxy\ge s_j$ and so 
$\alpha_j \le \Psi(y,\dkxy, 1/2^j)$.
Then \eqref{lemma-Ki-ni-iii-2} yields that
\bua 
\dkxy + d(a_k(x)y,a_{n_j}(x)y) & \le &    \dnjxy + \alpha_j\dkxy\\
& \le & \dnjxy +  \Psi(y,\dkxy, 1/2^j)\dkxy. 
\eua
Apply now property (C) to obtain that $d(a_k(x)y, \gamma_j(\dkxy)) \le \dkxy/2^j.$ \hfill $\blacksquare$\\[2mm]
For all $i\in\N$, we have that  $n_i\in[K_{i+1},n_{i+1}]$ and $\dnixy \le \dniuxy$, so we can 
apply \eqref{main-thm-claim1} (with $j:=i+1$ and $k:=n_i$) to conclude that 
\beq\label{main-thm-eq-1}
d(\gamma_{i+1}(\dnixy), \gamma_i(\dnixy)) =  d(\gamma_{i+1}(\dnixy), a_{n_i}(x)y) \le \frac{\dnixy}{2^{i+1}}.
\eeq
Fix now $R > 0$ and define $I(R)$ as the smallest integer for which $\dnirxy \ge R$. 
Let $i \ge I(R)$ be arbitrary. Since $(\dnixy)_i$ is nondecreasing, one has that  $\dnixy\ge R$.
By Busemann convexity, we get that 
$d(\gamma_{i+1}(R), \gamma_i(R)) \leq 
(R/\dnixy)d(\gamma_{i+1}(\dnixy), \gamma_i(\dnixy))$
and an application of \eqref{main-thm-eq-1} gives us
\beq\label{main-thm-eq-2}
d(\gamma_{i+1}(R), \gamma_i(R)) \leq  \frac{R}{2^{i+1}}. 
\eeq
Applying repeatedly \eqref{main-thm-eq-2}, we get that for all $m\in\N$, 
\[
d(\gamma_{i+m}(R), \gamma_i(R)) \le \sum_{j=1}^m \frac{1}{2^{i+j}}R \le \frac{R}{2^i}.
\]
Thus, $(\gamma_i(R))_{i \ge I(R)}$ is Cauchy, hence converges to 
$\gamma(R):= \lim_{i \to \infty}\gamma_i(R)$. Furthermore, 
\begin{equation} \label{main-thm-eq-3}
d(\gamma(R), \gamma_i(R)) \le \frac{R}{2^i} \quad \text{for all } i\geq I(R).
\end{equation}
It is easy to see that $\gamma$ is a geodesic ray starting at $y$.  

We shall prove in the sequel that $\lim_{k \to \infty}d(\gamma(Ak), a_k(x)y)/k = 0$. 
Let $k\in\N$. Then there exists $i$ such that $k\in[n_{i-1}, n_i)$, which yields $k\in (K_i, n_i)$. \\[1mm]
{\bf Claim 2}: $|\mik-Ak|\leq Ak/2^i$.\\
{\bf Proof of claim:}  If $\dkxy\leq \dnixy$, then by Lemma \eqref{lemma-Ki-ni}.(ii), we get that 
$|\mik-Ak|=|\dkxy-Ak| \le Ak/2^i$.
If $\dkxy > \dnixy$, since $|\dnixy-An_i| \le An_i/2^{i+1}$ (by Lemma \eqref{lemma-Ki-ni}.(ii))), we have 
that
\[(A-A/2^{i})k\leq (A-A/2^{i+1})n_i\leq \dnixy<\dkxy\leq (A+A/2^i)k.\]
On gets immediately that $|\mik-Ak|=|\dnixy-Ak|\leq Ak/2^i$. \hfill $\blacksquare$\\[2mm]
By \eqref{main-thm-claim1}, $d(a_k(x)y, \gamma_i(\mik)) \le \dkxy/2^i\leq 2Ak/2^i$.
Since  $\dniuxy > Ak$ and $\dnixy \ge \mik$, we have that  $i+1 \ge I(Ak)$ and $i \ge I(\mik)$, so we can apply 
\eqref{main-thm-eq-3} and \eqref{main-thm-eq-2} to get that 
$d(\gamma(Ak), \gamma_{i+1}(Ak))\leq Ak/2^{i+1}$ and
$d(\gamma_{i+1}(\mik), \gamma_{i}(\mik))\leq \mik/2^{i+1}\leq 2Ak/2^{i+1}$.
Thus, 
\bua
d(\gamma(Ak), a_k(x)y) & \le & d(\gamma(Ak), \gamma_{i+1}(Ak)) + d(\gamma_{i+1}(Ak), \gamma_{i+1}(\mik))\\
&&\quad + d(\gamma_{i+1}(\mik), \gamma_{i}(\mik)) + d(\gamma_i(\mik), a_k(x)y)\\
&\leq & \frac{Ak}{2^{i+1}}+ |Ak-\mik|+\frac{2Ak}{2^{i+1}}+\frac{2Ak}{2^i}\leq  \frac{9Ak}{2^{i+1}}.
\eua
This implies that $\ds \lim_{k \to \infty}\frac{1}{k}d(\gamma(Ak), a_k(x)y) = 0$. 
By Busemann convexity it follows easily that the obtained geodesic ray is unique.

\mbox{ } 

\noindent
{\bf Acknowledgements:} \\[1mm] 
Lauren\c tiu Leu\c stean was supported by a grant of the Romanian 
National Authority for Scientific Research, CNCS - UEFISCDI, project 
number PN-II-ID-PCE-2011-3-0383. \\
Adriana Nicolae was supported by a grant of the Romanian
Ministry of Education, CNCS - UEFISCDI, project number PN-II-RU-PD-2012-3-0152.\\
The authors are grateful to Bo\.{z}ena Pi\c{a}tek for providing useful comments on the paper.

\end{document}